\newtheorem{theorem}{Theorem}
\theoremstyle{plain}
\newtheorem{corollary}{Corollary}
\newtheorem{definition}{Definition}
\numberwithin{equation}{section}
\begin{document}
\title[vanishing generalized weighted Morrey spaces]{On the behaviors of
rough fractional type sublinear operators on vanishing generalized weighted
Morrey spaces}
\author{FER\.{I}T G\"{U}RB\"{U}Z}
\address{HAKKARI UNIVERSITY, FACULTY OF EDUCATION, DEPARTMENT OF MATHEMATICS
EDUCATION, HAKKARI, TURKEY }
\email{feritgurbuz@hakkari.edu.tr}
\urladdr{}
\thanks{}
\curraddr{ }
\urladdr{}
\thanks{}
\date{}
\subjclass[2000]{ 42B20, 42B25, 42B35}
\keywords{Fractional type sublinear operator; rough kernel; vanishing
generalized weighted Morrey space; commutator; $A\left( \frac{p}{s^{\prime }}%
,\frac{q}{s^{\prime }}\right) $ weight.}
\dedicatory{}
\thanks{}

\begin{abstract}
The aim of this paper is to get the boundedness of rough sublinear operators
generated by fractional integral operators on vanishing generalized weighted
Morrey spaces under generic size conditions which are satisfied by most of
the operators in harmonic analysis. Also, rough fractional integral operator
and a related rough fractional maximal operator which satisfy the conditions
of our main result can be considered as some examples.
\end{abstract}

\maketitle

\section{Introduction and useful informations}

\subsection{Background}

The classical fractional integral (The classical fractional integral
operator is also known as Riesz potential.) was introduced by Riesz in 1949 
\cite{Riesz}, defined by 
\begin{eqnarray*}
I_{\alpha }f(x) &=&\left( -\Delta \right) ^{-\frac{\alpha }{2}}f(x)\qquad
0<\alpha <n, \\
&=&\frac{1}{\gamma \left( \alpha \right) }\int \limits_{{\mathbb{R}^{n}}}%
\frac{f(y)}{|x-y|^{n-\alpha }}dy
\end{eqnarray*}%
with 
\begin{equation*}
\gamma \left( \alpha \right) =\frac{\pi ^{\frac{n}{2}}2^{\alpha }\Gamma
\left( \frac{\alpha }{2}\right) }{\Gamma \left( \frac{n}{2}-\frac{\alpha }{2}%
\right) },
\end{equation*}%
where $\Gamma \left( \cdot \right) $ is the standard gamma function and $%
I_{\alpha }$ plays an important role in partial diferential equation as the
inverse of power of Laplace operator. Especially, Its most significant
feature is that $I_{\alpha }$ maps $L_{p}({\mathbb{R}^{n}})$ continuously
into $L_{q}({\mathbb{R}^{n}})$, with $\frac{1}{q}=\frac{1}{p}-\frac{\alpha }{%
n}$ and $1<p<\frac{n}{\alpha }$, through the well known
Hardy-Littlewood-Sobolev imbedding theorem (see pp. 119-121,Theorem 1 and
its proof in \cite{Stein}) for $I_{\alpha }$.

Let $\Omega \in L_{s}(S^{n-1})$, $1<s\leq \infty $, $\Omega (\mu x)=\Omega
(x)~$for any$~\mu >0$, $x\in {\mathbb{R}^{n}}\setminus \{0\}$ and satisfy
the cancellation condition 
\begin{equation*}
\int \limits_{S^{n-1}}\Omega (x^{\prime })d\sigma (x^{\prime })=0,
\end{equation*}%
where $x^{\prime }=\frac{x}{|x|}$ for any $x\neq 0$.

We first recall the definitions of rough fractional integral operator $%
\overline{T}_{\Omega ,\alpha }$ and a related rough fractional maximal
operator $M_{\Omega ,\alpha }$.

\begin{definition}
Define%
\begin{equation*}
I_{\Omega ,\alpha }f(x)=\int \limits_{{\mathbb{R}^{n}}}\frac{\Omega (x-y)}{%
|x-y|^{n-\alpha }}f(y)dy\qquad 0<\alpha <n,
\end{equation*}%
\begin{equation*}
M_{\Omega ,\alpha }f\left( x\right) =\sup_{r>0}\frac{1}{r^{n-\alpha }}\dint
\limits_{|x-y|<r}\left \vert \Omega (x-y)\right \vert \left \vert f(y)\right
\vert dy\qquad 0<\alpha <n.
\end{equation*}
\end{definition}

Next, we give the definition of weighted Lebesgue spaces as follows:

\begin{definition}
$\left( \text{\textbf{Weighted Lebesgue space}}\right) $ Let $1\leq p\leq
\infty $ and given a weight $w\left( x\right) \in A_{p}\left( {{\mathbb{R}%
^{n}}}\right) $, we shall define weighted Lebesgue spaces as 
\begin{eqnarray*}
L_{p}(w) &\equiv &L_{p}({{\mathbb{R}^{n}}},w)=\left \{ f:\Vert f\Vert
_{L_{p,w}}=\left( \dint \limits_{{{\mathbb{R}^{n}}}}|f(x)|^{p}w(x)dx\right)
^{\frac{1}{p}}<\infty \right \} ,\qquad 1\leq p<\infty . \\
L_{\infty ,w} &\equiv &L_{\infty }({{\mathbb{R}^{n}}},w)=\left \{ f:\Vert
f\Vert _{L_{\infty ,w}}=\limfunc{esssup}\limits_{x\in {\mathbb{R}^{n}}%
}|f(x)|w(x)<\infty \right \} .
\end{eqnarray*}
\end{definition}

Here and later, $A_{p}$ denotes the Muckenhoupt classes (see \cite{Gurbuz}).

Now, let us consider the Muckenhoupt-Wheeden class $A\left( p,q\right) $ in 
\cite{Muckenhoupt}. One says that $w\left( x\right) \in A\left( p,q\right) $
for $1<p<q<\infty $ if and only if 
\begin{equation}
\lbrack w]_{A\left( p,q\right) }:=\sup \limits_{B}\left( |B|^{-1}\dint
\limits_{B}w(x)^{q}dx\right) ^{\frac{1}{q}}\left( |B|^{-1}\dint
\limits_{B}w(x)^{-p^{\prime }}dx\right) ^{\frac{1}{p^{\prime }}}<\infty ,
\label{13}
\end{equation}%
where the supremum is taken over all the balls $B$. Note that, by H\"{o}%
lder's inequality, for all balls $B$ we have 
\begin{equation}
\lbrack w]_{A\left( p,q\right) }\geq \lbrack w]_{A\left( p,q\right)
(B)}=|B|^{\frac{1}{p}-\frac{1}{q}-1}\Vert w\Vert _{L_{q}(B)}\Vert
w^{-1}\Vert _{L_{p^{\prime }}(B)}\geq 1.  \label{14}
\end{equation}

By (\ref{13}), we have%
\begin{equation}
\left( \dint \limits_{B}w(x)^{q}dx\right) ^{\frac{1}{q}}\left( \dint
\limits_{B}w(x)^{-p^{\prime }}dx\right) ^{\frac{1}{p^{\prime }}}\lesssim
\left \vert B\right \vert ^{\frac{1}{q}+\frac{1}{p^{\prime }}}.  \label{16}
\end{equation}%
On the other hand, let $\mu \left( x\right) =w\left( x\right) ^{s^{\prime }}$%
, $\tilde{p}=\frac{p}{s^{\prime }}$ and $\tilde{q}=\frac{q}{s^{\prime }}$.
If $w\left( x\right) ^{s^{\prime }}\in A\left( \frac{p}{s^{\prime }},\frac{q%
}{s^{\prime }}\right) $, then we get $\mu \left( x\right) \in A\left( \tilde{%
p},\tilde{q}\right) $. By (\ref{14}) and (\ref{16}), 
\begin{equation}
\Vert \mu \Vert _{L_{\tilde{q}}(B)}\Vert \mu ^{-1}\Vert _{L_{\tilde{p}%
^{\prime }}(B)}\approx |B|^{1+\frac{1}{\tilde{q}}-\frac{1}{\tilde{p}}}
\label{fg}
\end{equation}%
is valid.

Now, we introduce some spaces which play important roles in PDE. Except the
weighted Lebesgue space $L_{p}(w)$, the weighted Morrey space $L_{p,\kappa
}(w)$, which is a natural generalization of $L_{p}(w)$ is another important
function space. Then, the definition of generalized weighted Morrey spaces $%
M_{p,\varphi }\left( w\right) $ which could be viewed as extension of $%
L_{p,\kappa }(w)$ has been given as follows:

For $1\leq p<\infty $, positive measurable function $\varphi (x,r)$ on ${%
\mathbb{R}^{n}}\times (0,\infty )$ and nonnegative measurable function $w$
on ${\mathbb{R}^{n}}$, $f\in M_{p,\varphi }(w)\equiv M_{p,\varphi }({\mathbb{%
R}^{n}},w)$ if $f\in L_{p,w}^{loc}({\mathbb{R}^{n}})$ and 
\begin{equation*}
\Vert f\Vert _{M_{p,\varphi }(w)}=\sup \limits_{x\in {\mathbb{R}^{n}},r>0}%
\frac{1}{\varphi (x,r)}\Vert f\Vert _{L_{p}(B(x,r),w)}<\infty .
\end{equation*}%
is finite. Note that for $\varphi (x,r)\equiv w(B(x,r))^{\frac{\kappa }{p}}$%
, $0<\kappa <1$ and $\varphi (x,r)\equiv 1$, we have $M_{p,\varphi
}(w)=L_{p,\kappa }(w)$ and $M_{p,\varphi }(w)=L_{p}(w)$, respectively.

Extending the definition of vanishing generalized Morrey spaces in \cite%
{Gurbuz1} to the case of generalized weighted Morrey spaces defined above,
we introduce the following definition.

\begin{definition}
\textbf{(Vanishing generalized weighted Morrey spaces) }For\textbf{\ }$1\leq
p<\infty $, $\varphi (x,r)$ is a positive measurable function on ${\mathbb{R}%
^{n}}\times (0,\infty )$ and nonnegative measurable function $w$ on ${%
\mathbb{R}^{n}}$, $f\in VM_{p,\varphi }\left( w\right) \equiv VM_{p,\varphi
}({\mathbb{R}^{n},w})$ if $f\in L_{p,w}^{loc}({\mathbb{R}^{n}})$ and%
\begin{equation}
\lim \limits_{r\rightarrow 0}\sup \limits_{x\in {\mathbb{R}^{n}}}\frac{1}{%
\varphi (x,r)}\Vert f\Vert _{L_{p}(B(x,r),w)}=0.  \label{1*}
\end{equation}
\end{definition}

Inherently, it is appropriate to impose on $\varphi (x,t)$ with the
following circumstances:

\begin{equation}
\lim_{t\rightarrow 0}\sup \limits_{x\in {\mathbb{R}^{n}}}\frac{\left(
w(B(x,t))\right) ^{^{\frac{1}{p}}}}{\varphi (x,t)}=0,  \label{2}
\end{equation}%
and%
\begin{equation}
\inf_{t>1}\sup \limits_{x\in {\mathbb{R}^{n}}}\frac{\left( w(B(x,t))\right)
^{^{\frac{1}{p}}}}{\varphi (x,t)}>0.  \label{3}
\end{equation}%
From (\ref{2}) and (\ref{3}), we easily know that the bounded functions with
compact support belong to $VM_{p,\varphi }\left( w\right) $. On the other
hand, the space $VM_{p,\varphi }(w)$ is Banach space with respect to the
following finite quasi-norm%
\begin{equation*}
\Vert f\Vert _{VM_{p,\varphi }(w)}=\sup \limits_{x\in {\mathbb{R}^{n}},r>0}%
\frac{1}{\varphi (x,r)}\Vert f\Vert _{L_{p}(B(x,r),w)},
\end{equation*}%
such that%
\begin{equation*}
\lim \limits_{r\rightarrow 0}\sup \limits_{x\in {\mathbb{R}^{n}}}\frac{1}{%
\varphi (x,r)}\Vert f\Vert _{L_{p}(B(x,r),w)}=0,
\end{equation*}%
we omit the details. Moreover, we have the following embeddings:%
\begin{equation*}
VM_{p,\varphi }\left( w\right) \subset M_{p,\varphi }\left( w\right) ,\qquad
\Vert f\Vert _{M_{p,\varphi }\left( w\right) }\leq \Vert f\Vert
_{VM_{p,\varphi }\left( w\right) }.
\end{equation*}%
Henceforth, we denote by $\varphi \in \mathcal{B}\left( w\right) $ if $%
\varphi (x,r)$ is a positive measurable function on ${\mathbb{R}^{n}}\times
(0,\infty )$ and positive for all $(x,r)\in {\mathbb{R}^{n}}\times (0,\infty
)$ and satisfies (\ref{2}) and (\ref{3}).

The purpose of this paper is to consider the mapping properties for the
rough fractional type sublinear operators $T_{\Omega ,\alpha }$ satisfying
the following condition%
\begin{equation}
|T_{\Omega ,\alpha }f(x)|\lesssim \int \limits_{{\mathbb{R}^{n}}}\frac{%
|\Omega (x-y)|}{|x-y|^{n-\alpha }}\,|f(y)|\,dy,\qquad x\notin \text{supp }f
\label{e1}
\end{equation}%
on vanishing generalized weighted Morrey spaces. Similar results still hold
for the operators $I_{\Omega ,\alpha }$ and $M_{\Omega ,\alpha }$,
respectively. On the other hand, these operators have not also been studied
so far on vanishing generalized weighted Morrey spaces and this paper seems
to be the first in this direction.

At last, here and henceforth, $F\approx G$ means $F\gtrsim G\gtrsim F$;
while $F\gtrsim G$ means $F\geq CG$ for a constant $C>0$; and $p^{\prime }$
and $s^{\prime }$ always denote the conjugate index of any $p>1$ and $s>1$,
that is, $\frac{1}{p^{\prime }}:=1-\frac{1}{p}$ and $\frac{1}{s^{\prime }}%
:=1-\frac{1}{s}$ and also $C$ stands for a positive constant that can change
its value in each statement without explicit mention. Throughout the paper
we assume that $x\in {\mathbb{R}^{n}}$ and $r>0$ and also let $B(x,r)$
denotes $x$-centred Euclidean ball with radius $r$, $B^{C}(x,r)$ denotes its
complement. For any set $E$, $\chi _{_{E}}$ denotes its characteristic
function, if $E$ is also measurable and $w$ is a weight, $w(E):=\dint
\limits_{E}w(x)dx$.

\section{Main Results}

Our result can be stated as follows.

\begin{theorem}
\label{teo1}Suppose that $0<\alpha <n$, $1\leq s^{\prime }<p<\frac{n}{\alpha 
}$, $\frac{1}{q}=\frac{1}{p}-\frac{\alpha }{n}$, $1<q<\infty $, $\Omega \in
L_{s}(S^{n-1})$, $1<s\leq \infty $, $\Omega (\mu x)=\Omega (x)~$for any$~\mu
>0$, $x\in {\mathbb{R}^{n}}\setminus \{0\}$ such that $T_{\Omega ,\alpha }$
is rough fractional type sublinear operator satisfying (\ref{e1}). For $p>1$%
, $w\left( x\right) ^{s^{\prime }}\in A\left( \frac{p}{s^{\prime }},\frac{q}{%
s^{\prime }}\right) $ and $s^{\prime }<p$, the following pointwise estimate%
\begin{equation}
\left \Vert T_{\Omega ,\alpha }f\right \Vert _{L_{q}\left( B\left(
x_{0},r\right) ,w^{q}\right) }\lesssim \left( w^{q}\left( B\left(
x_{0},r\right) \right) \right) ^{\frac{1}{q}}\int \limits_{2r}^{\infty
}\left \Vert f\right \Vert _{L_{p}\left( B\left( x_{0},t\right)
,w^{p}\right) }\left( w^{q}\left( B\left( x_{0},t\right) \right) \right) ^{-%
\frac{1}{q}}\frac{dt}{t}  \label{5}
\end{equation}%
holds for any ball $B\left( x_{0},r\right) $ and for all $f\in
L_{p,w}^{loc}\left( {\mathbb{R}^{n}}\right) $. If $\varphi _{1}\in \mathcal{B%
}\left( w^{p}\right) $, $\varphi _{2}\in \mathcal{B}\left( w^{q}\right) $
and the pair $\left( \varphi _{1},\varphi _{2}\right) $ satisfies the
following conditions 
\begin{equation}
c_{\delta }:=\dint \limits_{\delta }^{\infty }\sup_{x\in {\mathbb{R}^{n}}}%
\frac{\varphi _{1}\left( x,t\right) }{\left( w^{q}\left( B\left( x,t\right)
\right) \right) ^{\frac{1}{q}}}\frac{1}{t}dt<\infty  \label{6}
\end{equation}%
for every $\delta >0$, and 
\begin{equation}
\int \limits_{r}^{\infty }\frac{\varphi _{1}\left( x,t\right) }{\left(
w^{q}\left( B\left( x,t\right) \right) \right) ^{\frac{1}{q}}}\frac{1}{t}%
dt\lesssim \frac{\varphi _{2}(x,r)}{\left( w^{q}\left( B\left( x,t\right)
\right) \right) ^{\frac{1}{q}}},  \label{7}
\end{equation}

then for $p>1$, $w\left( x\right) ^{s^{\prime }}\in A\left( \frac{p}{%
s^{\prime }},\frac{q}{s^{\prime }}\right) $ and $s^{\prime }<p$, the
operator $T_{\Omega ,\alpha }$ is bounded from $VM_{p,\varphi _{1}}\left(
w^{p}\right) $ to $VM_{q,\varphi _{2}}\left( w^{q}\right) $. Moreover,%
\begin{equation}
\left \Vert T_{\Omega ,\alpha }f\right \Vert _{VM_{q,\varphi _{2}}\left(
w^{q}\right) }\lesssim \left \Vert f\right \Vert _{VM_{p,\varphi _{1}}\left(
w^{p}\right) }\cdot  \label{8}
\end{equation}
\end{theorem}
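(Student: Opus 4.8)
The proof naturally splits into two parts: first establishing the pointwise (local) estimate \eqref{5}, and then deriving the vanishing Morrey boundedness \eqref{8} from it together with the conditions \eqref{6}–\eqref{7}.

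For the pointwise estimate \eqref{5}, I would fix a ball $B=B(x_{0},r)$ and decompose $f=f_{1}+f_{2}$ with $f_{1}=f\chi_{2B}$ and $f_{2}=f\chi_{(2B)^{C}}$, so that $T_{\Omega,\alpha}f=T_{\Omega,\alpha}f_{1}+T_{\Omega,\alpha}f_{2}$. For the local part, I would invoke the known weighted $(L_{p}(w^{p}),L_{q}(w^{q}))$ boundedness of $T_{\Omega,\alpha}$ under the hypothesis $w^{s^{\prime}}\in A\!\left(\frac{p}{s^{\prime}},\frac{q}{s^{\prime}}\right)$ (this reduces, via $\mu=w^{s^{\prime}}$, to the $A(\tilde p,\tilde q)$ theory for $I_{\Omega,\alpha}$ and $M_{\Omega,\alpha}$), giving $\|T_{\Omega,\alpha}f_{1}\|_{L_{q}(B,w^{q})}\lesssim\|f\|_{L_{p}(2B,w^{p})}$; then I would absorb this into the integral on the right of \eqref{5} by comparing $\|f\|_{L_{p}(2B,w^{p})}$ with $\int_{2r}^{\infty}\|f\|_{L_{p}(B(x_{0},t),w^{p})}(w^{q}(B(x_{0},t)))^{-1/q}\,\frac{dt}{t}$, using \eqref{fg} and the doubling of $w^{q}$. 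For the tail part $T_{\Omega,\alpha}f_{2}$, for $x\in B$ I would use \eqref{e1} and break $(2B)^{C}$ into dyadic annuli $2^{j+1}B\setminus 2^{j}B$; on each annulus $|x-y|\approx 2^{j}r$, and applying H\"{o}lder's inequality three times (with exponents tied to $s$, to $p$ and its dual against the weight) together with $\Omega\in L_{s}(S^{n-1})$ and \eqref{fg} yields $|T_{\Omega,\alpha}f_{2}(x)|\lesssim\int_{2r}^{\infty}\|f\|_{L_{p}(B(x_{0},t),w^{p})}(w^{q}(B(x_{0},t)))^{-1/q}\,\frac{dt}{t}$; multiplying by $\chi_{B}$ and taking the $L_{q}(w^{q})$ norm gives the stated bound.

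For the passage to vanishing Morrey spaces, I would first derive the generalized \emph{weighted} Morrey estimate $\|T_{\Omega,\alpha}f\|_{M_{q,\varphi_{2}}(w^{q})}\lesssim\|f\|_{M_{p,\varphi_{1}}(w^{p})}$: dividing \eqref{5} by $\varphi_{2}(x_{0},r)$, bounding $\|f\|_{L_{p}(B(x_{0},t),w^{p})}\le\varphi_{1}(x_{0},t)\,\|f\|_{M_{p,\varphi_{1}}(w^{p})}$ inside the integral, and then applying \eqref{7} to control $\int_{2r}^{\infty}\frac{\varphi_{1}(x_{0},t)}{(w^{q}(B(x_{0},t)))^{1/q}}\frac{dt}{t}$ by $\varphi_{2}(x_{0},r)/(w^{q}(B(x_{0},r)))^{1/q}$, which cancels the factor $(w^{q}(B(x_{0},r)))^{1/q}$ in \eqref{5}. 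It remains to check the vanishing property: given $f\in VM_{p,\varphi_{1}}(w^{p})$ and $\varepsilon>0$, split the integral in \eqref{5} at a fixed $\delta$, $\int_{2r}^{\infty}=\int_{2r}^{\delta}+\int_{\delta}^{\infty}$. On $[2r,\delta]$, use that $\sup_{x}\frac{1}{\varphi_{1}(x,t)}\|f\|_{L_{p}(B(x,t),w^{p})}$ is small (say $<\eta$) for $t<\delta$ by \eqref{1*}, and bound the resulting piece by $\eta\, c_{2r}$ via a telescoping/monotonicity argument; on $[\delta,\infty)$, bound $\|f\|_{L_{p}(B(x_{0},t),w^{p})}\le C_{\delta}\|f\|_{M_{p,\varphi_{1}}(w^{p})}$ crudely (using \eqref{3} for $\varphi_{1}$) so that this piece is controlled by a constant times $c_{\delta}$, which is finite by \eqref{6} and can be made $<\varepsilon$ after first fixing $\delta$ small; finally let $r\to0$. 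Together with \eqref{2} for $\varphi_{2}$ guaranteeing the endpoint behavior, this gives $\lim_{r\to0}\sup_{x_{0}}\frac{1}{\varphi_{2}(x_{0},r)}\|T_{\Omega,\alpha}f\|_{L_{q}(B(x_{0},r),w^{q})}=0$, i.e. $T_{\Omega,\alpha}f\in VM_{q,\varphi_{2}}(w^{q})$, and \eqref{8} follows.

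The main obstacle I anticipate is the tail estimate for $T_{\Omega,\alpha}f_{2}$: correctly organizing the three-fold H\"{o}lder inequality so that the $L_{s}$-norm of $\Omega$ on the sphere, the local $L_{p}(w^{p})$-norm of $f$, and the weight factor $(w^{q}(B(x_{0},t)))^{-1/q}$ all separate cleanly on each dyadic annulus, and then summing the dyadic series and recognizing it as the claimed integral — this requires careful bookkeeping with the exponents $s^{\prime}<p<q$ and repeated use of \eqref{fg}. The vanishing argument, while technical, is a fairly standard $\varepsilon$–$\delta$ splitting once \eqref{5} and the conditions \eqref{6}–\eqref{7} are in hand.
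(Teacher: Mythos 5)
Your proposal follows essentially the same route as the paper: the same $f=f_{1}+f_{2}$ decomposition, the weighted $(L_{p}(w^{p}),L_{q}(w^{q}))$ boundedness together with the $\int_{2r}^{\infty}t^{-(n-\alpha s^{\prime})-1}\,dt$ trick and \eqref{fg} for the local part, the annulus/H\"{o}lder estimate for the tail (which the paper outsources to a lemma of Balakishiyev et al.\ rather than redoing the dyadic sum), and the splitting of \eqref{5} via \eqref{7}, \eqref{6} and \eqref{2} for the vanishing property (which the paper omits entirely, citing \cite{Gurbuz1}). One small correction to your sketch of that last step: the piece over $[2r,\delta]$ should be closed off with condition \eqref{7}, which makes the prefactor $(w^{q}(B(x_{0},r)))^{1/q}/\varphi_{2}(x_{0},r)$ cancel and leaves a bound $\lesssim\eta$, rather than with $c_{2r}$, since \eqref{6} gives no uniform control of $c_{2r}$ as $r\to0$.
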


\begin{proof}
Since inequality (\ref{5}) is the heart of the proof of (\ref{8}), we first
prove (\ref{5}).

For any $x_{0}\in {\mathbb{R}^{n}}$, we write as $f=f_{1}+f_{2}$, where $%
f_{1}\left( y\right) =f\left( y\right) \chi _{B\left( x_{0},2r\right)
}\left( y\right) $, $f_{2}\left( y\right) =f\left( y\right) \chi _{\left(
B\left( x_{0},2r\right) \right) ^{C}}\left( y\right) $, $r>0$ and $\chi
_{B\left( x_{0},2r\right) }$ denotes the characteristic function of $B\left(
x_{0},2r\right) $. Then%
\begin{equation*}
\left \Vert T_{\Omega ,\alpha }f\right \Vert _{L_{q}\left( w^{q},B\left(
x_{0},r\right) \right) }\leq \left \Vert T_{\Omega ,\alpha }f_{1}\right
\Vert _{L_{q}\left( w^{q},B\left( x_{0},r\right) \right) }+\left \Vert
T_{\Omega ,\alpha }f_{2}\right \Vert _{L_{q}\left( w^{q},B\left(
x_{0},r\right) \right) }.
\end{equation*}%
Let us estimate $\left \Vert T_{\Omega ,\alpha }f_{1}\right \Vert
_{L_{q}\left( w^{q},B\left( x_{0},r\right) \right) }$ and $\left \Vert
T_{\Omega ,\alpha }f_{2}\right \Vert _{L_{q}\left( w^{q},B\left(
x_{0},r\right) \right) }$, respectively.

Since $f_{1}\in L_{p}\left( w^{p},{\mathbb{R}^{n}}\right) $, by the
boundedness of $T_{\Omega ,\alpha }$ from $L_{p}\left( w^{p},{\mathbb{R}^{n}}%
\right) $ to $L_{q}\left( w^{q},{\mathbb{R}^{n}}\right) $ (see Theorem 3.4.2
in \cite{Lu}), (\ref{fg}) and since $1\leq s^{\prime }<p<q$ we get 
\begin{eqnarray*}
\left \Vert T_{\Omega ,\alpha }f_{1}\right \Vert _{L_{q}\left( w^{q},B\left(
x_{0},r\right) \right) } &\leq &\left \Vert T_{\Omega ,\alpha }f_{1}\right
\Vert _{L_{q}\left( w^{q},{\mathbb{R}^{n}}\right) } \\
&\lesssim &\left \Vert f_{1}\right \Vert _{L_{p}\left( w^{p},{\mathbb{R}^{n}}%
\right) } \\
&=&\left \Vert f\right \Vert _{L_{p}\left( w^{p},B\left( x_{0},2r\right)
\right) } \\
&\lesssim &r^{n-\alpha s^{\prime }}\left \Vert f\right \Vert _{L_{p}\left(
w^{p},B\left( x_{0},2r\right) \right) }\dint \limits_{2r}^{\infty }\frac{dt}{%
t^{n-\alpha s^{\prime }+1}} \\
&\approx &\Vert w^{s^{\prime }}\Vert _{L_{\frac{q}{s^{\prime }}}(B\left(
x_{0},r\right) )}\Vert w^{-s^{\prime }}\Vert _{L_{\left( \frac{p}{s^{\prime }%
}\right) ^{\prime }}(B\left( x_{0},r\right) )}\dint \limits_{2r}^{\infty
}\left \Vert f\right \Vert _{L_{p}\left( w^{p},B\left( x_{0},t\right)
\right) }\frac{dt}{t^{n-\alpha s^{\prime }+1}} \\
&\lesssim &\left( w^{q}\left( B(x_{0},r)\right) \right) ^{\frac{1}{q}}\dint
\limits_{2r}^{\infty }\left \Vert f\right \Vert _{L_{p}\left( w^{p},B\left(
x_{0},t\right) \right) }\Vert w^{-s^{\prime }}\Vert _{L_{\left( \frac{p}{%
s^{\prime }}\right) ^{\prime }}(B\left( x_{0},t\right) )}\frac{dt}{%
t^{n-\alpha s^{\prime }+1}} \\
&\lesssim &\left( w^{q}\left( B(x_{0},r)\right) \right) ^{\frac{1}{q}}\dint
\limits_{2r}^{\infty }\left \Vert f\right \Vert _{L_{p}\left( w^{p},B\left(
x_{0},t\right) \right) }\left[ \Vert w^{s^{\prime }}\Vert _{L_{\left( \frac{q%
}{s^{\prime }}\right) }(B\left( x_{0},t\right) )}\right] ^{-1}\frac{1}{t}dt
\\
&\lesssim &\left( w^{q}\left( B(x_{0},r)\right) \right) ^{\frac{1}{q}} \\
&&\times \dint \limits_{2r}^{\infty }\left \Vert f\right \Vert _{L_{p}\left(
w^{p},B(x_{0},t)\right) }\left( w^{q}\left( B(x_{0},t)\right) \right) ^{-%
\frac{1}{q}}\frac{1}{t}dt.
\end{eqnarray*}%
Now, let's estimate the second part ($=\left \Vert T_{\Omega ,\alpha
}f_{2}\right \Vert _{L_{q}\left( w^{q},B\left( x_{0},r\right) \right) }$).
For the estimate used in $\left \Vert T_{\Omega ,\alpha }f_{2}\right \Vert
_{L_{q}\left( w^{q},B\left( x_{0},r\right) \right) }$, we first have to
prove the below inequality:%
\begin{equation}
\left \vert T_{\Omega ,\alpha }f_{2}\left( x\right) \right \vert \lesssim
\dint \limits_{2r}^{\infty }\left \Vert f\right \Vert _{L_{p}\left(
w^{p},B(x_{0},t)\right) }\left( w^{q}\left( B(x_{0},t)\right) \right) ^{-%
\frac{1}{q}}\frac{1}{t}dt.  \label{11}
\end{equation}

By \cite{Balak} (see pp. 7 in the proof of Lemma2:), we get%
\begin{equation}
\left \vert T_{\Omega ,\alpha }f_{2}\left( x\right) \right \vert \lesssim
\int \limits_{2r}^{\infty }\left \Vert \Omega \left( x-\cdot \right) \right
\Vert _{L_{s}\left( B\left( x_{0},t\right) \right) }\left \Vert f\right
\Vert _{L_{s^{\prime }}\left( B\left( x_{0},t\right) \right) }\frac{dt}{%
t^{n+1-\alpha }}.  \label{310}
\end{equation}%
On the other hand, by H\"{o}lder's inequality we have%
\begin{eqnarray}
\left \Vert f\right \Vert _{L_{s^{\prime }}\left( B\left( x_{0},t\right)
\right) } &=&\left( \dint \limits_{B\left( x_{0},t\right) }\left \vert
f\left( y\right) \right \vert ^{s^{\prime }}dy\right) ^{\frac{1}{s^{\prime }}%
}  \notag \\
&\leq &\left( \dint \limits_{B\left( x_{0},t\right) }\left \vert f\left(
y\right) \right \vert ^{p}\left \vert \mu \left( y\right) \right \vert ^{%
\widetilde{p}}dy\right) ^{\frac{1}{p}}\left( \dint \limits_{B\left(
x_{0},t\right) }\left \vert \mu \left( y\right) \right \vert ^{-\widetilde{p}%
^{\prime }}dy\right) ^{\frac{1}{\widetilde{p}^{\prime }s^{\prime }}}  \notag
\\
&\leq &\left( \dint \limits_{B\left( x_{0},t\right) }\left \vert f\left(
y\right) \right \vert ^{p}\left \vert \mu \left( y\right) \right \vert ^{%
\widetilde{p}}dy\right) ^{\frac{1}{p}}\left( w^{q}\left( B(x_{0},t)\right)
\right) ^{-\frac{1}{q}}|B(x_{0},t)|^{\frac{1}{s^{\prime }}+\frac{1}{q}-\frac{%
1}{p}}  \notag \\
&=&\left \Vert f\right \Vert _{L_{p}\left( w^{p},B(x_{0},t)\right) }\left(
w^{q}\left( B(x_{0},t)\right) \right) ^{-\frac{1}{q}}|B(x_{0},t)|^{\frac{1}{%
s^{\prime }}+\frac{1}{q}-\frac{1}{p}},  \label{36}
\end{eqnarray}%
where in the second inequality we have used the following fact:

By (\ref{fg}), we get the following:%
\begin{eqnarray}
\left( \dint\limits_{B\left( x_{0},t\right) }\left\vert \mu \left( y\right)
\right\vert ^{-\widetilde{p}^{\prime }}dy\right) ^{\frac{1}{\widetilde{p}%
^{\prime }s^{\prime }}} &\approx &\left[ \Vert \mu \Vert _{L_{\tilde{q}%
}(B\left( x_{0},t\right) )}\right] ^{-\frac{1}{s^{\prime }}}\left[ |B\left(
x_{0},t\right) |^{1+\frac{1}{\tilde{q}}-\frac{1}{\tilde{p}}}\right] ^{\frac{1%
}{s^{\prime }}}  \notag \\
&=&\left[ \left( \Vert w^{s^{\prime }}\Vert _{L_{\tilde{q}}(B\left(
x_{0},t\right) )}\right) ^{-1}|B\left( x_{0},t\right) |^{1+\frac{1}{\tilde{q}%
}-\frac{1}{\tilde{p}}}\right] ^{\frac{1}{s^{\prime }}}  \notag \\
&=&\left[ \left( \dint\limits_{B\left( x_{0},t\right) }\left\vert w\left(
y\right) \right\vert ^{q}dy\right) ^{-\frac{s^{\prime }}{q}}|B\left(
x_{0},t\right) |^{1+\frac{s^{\prime }}{q}-\frac{s^{\prime }}{p}}\right] ^{%
\frac{1}{s^{\prime }}}  \notag \\
&=&\left( w^{q}\left( B(x_{0},t)\right) \right) ^{-\frac{1}{q}}|B(x_{0},t)|^{%
\frac{1}{s^{\prime }}+\frac{1}{q}-\frac{1}{p}}.  \label{100}
\end{eqnarray}%
At last, substituting (3.10) in \cite{Balak} and (\ref{36}) into (\ref{310}%
), the proof of (\ref{11}) is completed. Thus, by (\ref{11}) we get 
\begin{eqnarray*}
\left\Vert T_{\Omega ,\alpha }f_{2}\right\Vert _{L_{q}\left( w^{q},B\left(
x_{0},r\right) \right) } &\lesssim &\left( w^{q}\left( B(x_{0},r)\right)
\right) ^{\frac{1}{q}} \\
&&\times \dint\limits_{2r}^{\infty }\left\Vert f\right\Vert _{L_{p}\left(
w^{p},B(x_{0},t)\right) }\left( w^{q}\left( B(x_{0},t)\right) \right) ^{-%
\frac{1}{q}}\frac{1}{t}dt.
\end{eqnarray*}%
Combining all the estimates for $\left\Vert T_{\Omega ,\alpha
}f_{1}\right\Vert _{L_{q}\left( w^{q},B\left( x_{0},r\right) \right) }$ and $%
\left\Vert T_{\Omega ,\alpha }f_{2}\right\Vert _{L_{q}\left( w^{q},B\left(
x_{0},r\right) \right) }$, we get (\ref{5}).

Now, let's estimate the second part (\ref{8}) of Theorem \ref{teo1}. Indeed,
by the definition of vanishing generalized weighted Morrey spaces, (\ref{5})
and (\ref{7}), we have%
\begin{eqnarray*}
\Vert T_{\Omega ,\alpha }f\Vert _{VM_{q,\varphi _{2}}\left( w^{q}\right) }
&=&\sup \limits_{x\in {\mathbb{R}^{n}},r>0}\frac{1}{\varphi _{2}(x,r)}\Vert
T_{\Omega ,\alpha }f\Vert _{L_{q}\left( w^{q},B\left( x_{0},r\right) \right)
} \\
&\lesssim &\sup \limits_{x\in {\mathbb{R}^{n}},r>0}\frac{1}{\varphi _{2}(x,r)%
}\left( w^{q}\left( B\left( x_{0},r\right) \right) \right) ^{\frac{1}{q}} \\
&&\times \int \limits_{r}^{\infty }\left \Vert f\right \Vert _{L_{p}\left(
B\left( x_{0},t\right) ,w^{p}\right) }\left( w^{q}\left( B\left(
x_{0},t\right) \right) \right) ^{-\frac{1}{q}}\frac{dt}{t} \\
&\lesssim &\sup \limits_{x\in {\mathbb{R}^{n}},r>0}\frac{1}{\varphi _{2}(x,r)%
}\left( w^{q}\left( B\left( x_{0},r\right) \right) \right) ^{\frac{1}{q}} \\
&&\times \int \limits_{r}^{\infty }\left( w^{q}\left( B\left( x_{0},t\right)
\right) \right) ^{-\frac{1}{q}}\varphi _{1}\left( x,t\right) \left[ \varphi
_{1}\left( x,t\right) ^{-1}\left \Vert f\right \Vert _{L_{p}\left( B\left(
x_{0},t\right) ,w^{p}\right) }\right] \frac{dt}{t} \\
&\lesssim &\left \Vert f\right \Vert _{VM_{p,\varphi _{1}}\left(
w^{p}\right) }\sup \limits_{x\in {\mathbb{R}^{n}},r>0}\frac{1}{\varphi
_{2}(x,r)}\left( w^{q}\left( B\left( x_{0},r\right) \right) \right) ^{\frac{1%
}{q}} \\
&&\times \int \limits_{r}^{\infty }\left( w^{q}\left( B\left( x_{0},t\right)
\right) \right) ^{-\frac{1}{q}}\varphi _{1}\left( x,t\right) \frac{dt}{t} \\
&\lesssim &\left \Vert f\right \Vert _{VM_{p,\varphi _{1}}\left(
w^{p}\right) }.
\end{eqnarray*}%
At last, we need to prove that%
\begin{equation*}
\lim \limits_{r\rightarrow 0}\sup \limits_{x\in {\mathbb{R}^{n}}}\frac{1}{%
\varphi _{2}(x,r)}\Vert T_{\Omega ,\alpha }f\Vert _{L_{q}\left(
w^{q},B\left( x_{0},r\right) \right) }\lesssim \lim \limits_{r\rightarrow
0}\sup \limits_{x\in {\mathbb{R}^{n}}}\frac{1}{\varphi _{1}(x,r)}\Vert
f\Vert _{L_{p}\left( w^{p},B\left( x_{0},r\right) \right) }=0.
\end{equation*}%
But, because the proof of above inequality is similar to Theorem 2 in \cite%
{Gurbuz1}, we omit the details, which completes the proof.
\end{proof}

\begin{corollary}
Under the conditions of Theorem \ref{teo1}, the operators $M_{\Omega ,\alpha
}$ and $I_{\Omega ,\alpha }$ are bounded from $VM_{p,\varphi _{1}}\left(
w^{p}\right) $ to $VM_{q,\varphi _{2}}\left( w^{q}\right) $.
\end{corollary}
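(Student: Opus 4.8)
The plan is to observe that both $I_{\Omega ,\alpha }$ and $M_{\Omega ,\alpha }$ are themselves rough fractional type sublinear operators satisfying condition (\ref{e1}), so that the corollary follows by a direct application of Theorem \ref{teo1} with $T_{\Omega ,\alpha }$ replaced successively by $I_{\Omega ,\alpha }$ and by $M_{\Omega ,\alpha }$. Hence the only thing to verify is that each of these two concrete operators fits all the hypotheses imposed on $T_{\Omega ,\alpha }$ in that theorem.

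First, sublinearity: $I_{\Omega ,\alpha }$ is linear, while $M_{\Omega ,\alpha }$ is sublinear because $\left\vert f(y)+g(y)\right\vert \leq \left\vert f(y)\right\vert +\left\vert g(y)\right\vert $ and the supremum over $r>0$ is subadditive. Second, the size estimate (\ref{e1}): for $I_{\Omega ,\alpha }$ the bound
\[
\left\vert I_{\Omega ,\alpha }f(x)\right\vert \leq \int\limits_{{\mathbb{R}^{n}}}\frac{\left\vert \Omega (x-y)\right\vert }{|x-y|^{n-\alpha }}\,\left\vert f(y)\right\vert \,dy
\]
is immediate from the definition, for every $x$ (in particular for $x\notin \text{supp}\,f$). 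For $M_{\Omega ,\alpha }$, I would fix $r>0$ and use $0<\alpha <n$, so that $r^{-(n-\alpha )}\leq |x-y|^{-(n-\alpha )}$ whenever $|x-y|<r$; this gives
\[
\frac{1}{r^{n-\alpha }}\dint\limits_{|x-y|<r}\left\vert \Omega (x-y)\right\vert \left\vert f(y)\right\vert dy\leq \dint\limits_{|x-y|<r}\frac{\left\vert \Omega (x-y)\right\vert }{|x-y|^{n-\alpha }}\left\vert f(y)\right\vert dy\leq \int\limits_{{\mathbb{R}^{n}}}\frac{\left\vert \Omega (x-y)\right\vert }{|x-y|^{n-\alpha }}\left\vert f(y)\right\vert dy,
\]
and taking the supremum over $r>0$ yields (\ref{e1}) for $M_{\Omega ,\alpha }$ as well.

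Since, in addition, the $L_{p}(w^{p})\rightarrow L_{q}(w^{q})$ boundedness invoked inside the proof of Theorem \ref{teo1} holds for $I_{\Omega ,\alpha }$ and $M_{\Omega ,\alpha }$ precisely by Theorem 3.4.2 in \cite{Lu}, both operators meet every hypothesis placed on $T_{\Omega ,\alpha }$. Therefore Theorem \ref{teo1} applies to each of them and delivers the boundedness from $VM_{p,\varphi _{1}}(w^{p})$ to $VM_{q,\varphi _{2}}(w^{q})$, together with a norm inequality of the form (\ref{8}). I do not expect a genuine obstacle: the only mildly delicate point is the direction of the inequality $r^{-(n-\alpha )}\leq |x-y|^{-(n-\alpha )}$, which rests on $n-\alpha >0$, together with the observation that (\ref{e1}) is required only for $x\notin \text{supp}\,f$ whereas in fact the stronger global pointwise bound is available; beyond this, the argument is a verbatim invocation of the main theorem.
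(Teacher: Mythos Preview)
Your proposal is correct and follows exactly the approach the paper intends: the corollary is stated without proof because $I_{\Omega,\alpha}$ and $M_{\Omega,\alpha}$ are precisely the prototypical examples of operators satisfying (\ref{e1}), so Theorem~\ref{teo1} applies directly. Your verification that both operators are sublinear and satisfy the size condition (\ref{e1}) simply makes explicit what the paper leaves implicit.
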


\begin{corollary}
For $w\equiv 1$, under the conditions of Theorem \ref{teo1}, we get the
Theorem 2 in \cite{Gurbuz1}.
\end{corollary}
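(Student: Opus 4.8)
The plan is not to re-prove anything but to verify that substituting $w\equiv 1$ collapses every hypothesis and every conclusion of Theorem~\ref{teo1} onto the corresponding object in Theorem~2 of \cite{Gurbuz1}; once this dictionary is in place the corollary is immediate, because $w\equiv 1$ is an admissible weight for Theorem~\ref{teo1}. The first thing I would check is the entry condition. For the constant weight we have $\mu=w^{s^{\prime }}=1$, and in the defining quotient (\ref{13}) for the class $A\left( \frac{p}{s^{\prime }},\frac{q}{s^{\prime }}\right) $ each averaged factor reduces to $|B|^{-1}\int_{B}1\,dy=1$; hence $[\mu]_{A\left( \frac{p}{s^{\prime }},\frac{q}{s^{\prime }}\right) }=1<\infty $, so $w^{s^{\prime }}\in A\left( \frac{p}{s^{\prime }},\frac{q}{s^{\prime }}\right) $ holds trivially, while the remaining numerical requirements ($0<\alpha <n$, $1\leq s^{\prime }<p<\frac{n}{\alpha }$, $\frac{1}{q}=\frac{1}{p}-\frac{\alpha }{n}$, $s^{\prime }<p$) are word for word those of \cite{Gurbuz1}. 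Thus Theorem~\ref{teo1} is applicable with this weight and its conclusion is available.

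Next I would record the elementary identities forced by $w\equiv 1$. Since $w^{p}\equiv w^{q}\equiv 1$, for every ball we have $w^{p}(B(x,t))=w^{q}(B(x,t))=|B(x,t)|\approx t^{n}$, and the weighted local norm degenerates to the ordinary one,
\begin{equation*}
\|f\|_{L_{p}(B(x,r),w^{p})}=\left( \int_{B(x,r)}|f(y)|^{p}\,dy\right) ^{\frac{1}{p}}=\|f\|_{L_{p}(B(x,r))}.
\end{equation*}
Inserting these into the pointwise estimate (\ref{5}) turns it into
\begin{equation*}
\|T_{\Omega ,\alpha }f\|_{L_{q}(B(x_{0},r))}\lesssim r^{\frac{n}{q}}\int_{2r}^{\infty }\|f\|_{L_{p}(B(x_{0},t))}\,t^{-\frac{n}{q}}\,\frac{dt}{t},
\end{equation*}
which is precisely the local (Guliyev-type) estimate that drives Theorem~2 in \cite{Gurbuz1}.

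The same substitution specializes the admissibility and pairing data. The membership $\varphi \in \mathcal{B}(w^{p})$, governed by (\ref{2})--(\ref{3}), becomes $\lim_{t\rightarrow 0}\sup_{x}t^{n/p}/\varphi (x,t)=0$ and $\inf_{t>1}\sup_{x}t^{n/p}/\varphi (x,t)>0$, i.e.\ the unweighted class $\mathcal{B}$ of \cite{Gurbuz1}; likewise conditions (\ref{6}) and (\ref{7}) reduce to
\begin{equation*}
\int_{\delta }^{\infty }\sup_{x}\frac{\varphi _{1}(x,t)}{t^{n/q}}\,\frac{dt}{t}<\infty \qquad \text{and}\qquad \int_{r}^{\infty }\frac{\varphi _{1}(x,t)}{t^{n/q}}\,\frac{dt}{t}\lesssim \frac{\varphi _{2}(x,r)}{r^{n/q}},
\end{equation*}
which are exactly the constraints imposed on the pair $(\varphi _{1},\varphi _{2})$ in \cite{Gurbuz1}. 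Correspondingly the function spaces coincide, $VM_{p,\varphi _{1}}(w^{p})=VM_{p,\varphi _{1}}$ and $VM_{q,\varphi _{2}}(w^{q})=VM_{q,\varphi _{2}}$, the unweighted vanishing generalized Morrey spaces.

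Finally I would read off the conclusion. With the identifications above, the boundedness bound (\ref{8}) becomes $\|T_{\Omega ,\alpha }f\|_{VM_{q,\varphi _{2}}}\lesssim \|f\|_{VM_{p,\varphi _{1}}}$, verbatim the assertion of Theorem~2 in \cite{Gurbuz1}, and the vanishing limit established at the close of the proof of Theorem~\ref{teo1} is exactly the vanishing property demanded there. I anticipate no genuine obstacle, only bookkeeping; the single point meriting care is the consistent use of the normalization $|B(x,t)|\approx t^{n}$, so that the factors $r^{n/q}$ and $t^{-n/q}$ appear exactly as in \cite{Gurbuz1}, together with interpreting the denominator on the right of (\ref{7}) as $\left( w^{q}(B(x,r))\right) ^{1/q}$ with the free radius $r$.
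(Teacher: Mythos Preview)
Your proposal is correct and is essentially the same approach the paper takes: the corollary is stated without proof there, as an immediate specialization of Theorem~\ref{teo1} to $w\equiv 1$, and you have simply spelled out the obvious dictionary ($w^{p}=w^{q}=1$, $w^{q}(B(x,t))\approx t^{n}$, the $A(p/s',q/s')$ condition becoming vacuous, and the spaces $VM_{p,\varphi}(w^{p})$ reducing to $VM_{p,\varphi}$). Nothing further is needed.
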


\end{document}